\documentclass[11pt,mathserif]{amsart}
\usepackage[utf8]{inputenc}
\usepackage{eulervm,ulem}
\usepackage{amsmath}
\usepackage{amsthm}
\usepackage{import}
\usepackage{geometry}
\usepackage{mathrsfs} 
\usepackage{amsfonts}
\usepackage{amssymb}
\usepackage{palatino,color}
\usepackage{hyperref}
\usepackage{float}
\usepackage[font=small,skip=-10pt]{caption}
\usepackage{comment,accents}
\usepackage[export]{adjustbox}
\usepackage{enumerate,graphicx,enumitem,enumerate}


\newtheorem{prop}{Proposition}[section]
\newtheorem{theorem}{Theorem}[section]
\newtheorem{lemma}{Lemma}[section]
\newtheorem{corollary}{Corollary}[section]

\newtheorem{remark}{Remark}[section]

\newcommand{\trace}{{\rm trace\,}}

\usepackage{amsmath,amssymb,euscript,yfonts,psfrag,latexsym,dsfont,graphicx,bbm,color,amstext,wasysym,subfig,flushend,parskip,soul,bm}
\usepackage{amsthm}
\usepackage[dvipsnames]{xcolor}

\graphicspath{{./},{./figures/}}

\newcommand{\ignore}[1]{}

\newcommand{\black}{\color{black}}

\definecolor{grey}{rgb}{0.6,0.3,0.3}
\definecolor{lgrey}{rgb}{0.9,.7,0.7}

\def\spacingset#1{\def\baselinestretch{#1}\small\normalsize}
\setlength{\parindent}{20pt}
\setlength{\parskip}{12pt}
\spacingset{1}
\renewcommand{\em}{\it}

\title[On a Fej\'er-Riesz factorization]{On a Fej\'er-Riesz factorization\\
of generalized trigonometric polynomials}
\author{Tryphon T. Georgiou and Anders Lindquist}
\thanks{T.T.\ Georgiou is with the Department of Mechanical \& Aerospace Engineering,
University of California, Irvine, email: {tryphon@uci.edu}}
\thanks{A.\ Lindquist is with the Department of  Automation and the School of Mathematical Sciences, Shanghai Jiao Tong University, Shanghai, China, and with the Dept.\ of Mathematics,
KTH Royal Institute of Technology, Stockholm, Sweden, email: {alq@kth.se}}

\begin{document}
	\maketitle
	
	\begin{center}
{\em Dedicated to Tyrone Duncan on his 80th birthday}
	\end{center}
\begin{abstract}
Function theory on the unit disc proved key to a range of problems in statistics, probability theory, signal processing literature, and applications, and in this, a special place is occupied by trigonometric functions and the {\em Fej\'er-Riesz theorem} that non-negative trigonometric polynomials can be expressed as the modulus of a polynomial of the same degree evaluated on the unit circle. In the present note we consider a natural generalization of non-negative trigonometric polynomials that are matrix-valued with specified non-trivial poles (i.e., other than at the origin or at infinity). We are interested in the corresponding spectral factors and, specifically, we show that the factorization of trigonometric polynomials can be carried out in complete analogy with the Fej\'er-Riesz theorem.
The affinity of the factorization with the Fej\'er-Riesz theorem and the contrast to classical spectral factorization lies in the fact that the spectral factors have degree smaller than what standard construction in factorization theory would suggest. We provide two juxtaposed proofs of this fundamental theorem,
albeit for the case of strict positivity,
\black
one that relies on analytic interpolation theory and another that utilizes classical factorization theory based on the Yacubovich-Popov-Kalman (YPK) positive-real lemma.
\black
\end{abstract}

\renewcommand{\sigma}{V}

\section{Introduction}
The classical 
Fej\'er-Riesz theorem states \cite[Section 1.12]{GZ} that any nonnegative trigonometric polynomial in $z$,
\begin{align*}
p(e^{i\theta})&= \Lambda_0 +2\sum_{k=1}^{n-1} a_k\cos(k\theta)+b_k\sin(k\theta)\\
&=:\sum_{k=-(n-1)}^{n-1} c_k e^{ik\theta}
\end{align*}
for $\Lambda_0\in\mathbb R$ and $\Lambda_0>0$, and $c_k=a_k+ib_k\in\mathbb C$, $k\in\{1,2,\ldots,k-1\}$, i.e., one such that
\[p(e^{i\theta})\geq 0, \mbox{ for all }\theta\in[-\pi,\pi],
\]
can be written as the square of the modulus of a polynomial 
\[
g(z)=g_0+g_1z+\ldots +g_{n-1}z^{n-1}
\]
in $z$ of {\em equal degree}, where $z$ is on the unit circle, $z=e^{i\theta}$. 
The roots of $g(z)$ in the Fej\'er-Riesz factorization can be selected outside the unit disc \cite[Section 1.12]{GZ}. 
\black

Our aim in this work is derive a natural generalization of the  Fej\'er-Riesz factorization that applies to matrix-valued para-conjugate Hermitian rational functions as stated below. The statement is provided for positive generalized trigonometric polynomials while the more general case will be dealt with in future work.

\begin{theorem}{\bf Generalized Fej\'er-Riesz theorem:} \label{thm:factorization}
Let $\Lambda$ be a Hermitian matrix in $\mathbb C^{n\times n}$, $(A,B)\in \mathbb C^{n\times n}\times \mathbb C^{n\times m}$ a reachable pair of matrices with $B$ having rank $m$,
and the spectrum of $A$ in the open unit disc, \black and let
\begin{equation}\label{eq:kernel}
G(z):=(I-zA)^{-1}B.
\end{equation}
If
\begin{equation}\label{eq:correctform}
p(z)=G(z)^*\Lambda G(z)
\end{equation}
is positive definite for all $z=e^{i\theta}$ with $\theta\in[0,2\pi]$, then there exists a factorization
\begin{equation}\label{eq:factor}
p(z)=\underbrace{G(z)^*C^*}_{\sigma(z)^*}\Omega \underbrace{CG(z)}_{\sigma(z)},
\end{equation}
for a unique pair $(C,\Omega)$, with $C\in \mathbb R^{m\times n}$  and $\Omega\in\mathbb S^{m\times m}_+$ satisfying
\begin{itemize}
\item[i)] $CB=I$,
\item[ii)] $\Omega$ is a positive definite matrix,
\item[iii)] the polynomial $\det(\sigma(z))$, with $\sigma(z):=CG(z)$, has no root in the closed unit disc.
\black 
\end{itemize}
%
\end{theorem}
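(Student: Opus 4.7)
The plan is to reduce the factorization \eqref{eq:factor} to the stabilizing solution of a discrete-time algebraic Riccati equation (DARE), which is then extracted via the YPK positive-real lemma.

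The first step is to identify the ``gauge freedom'' of the representation $p(z)=G(z)^* NG(z)$ among Hermitian $n\times n$ matrices $N$. Expanding $G(z)=\sum_{k\ge 0}z^kA^kB$ and matching Fourier coefficients on $|z|=1$, and using reachability of $(A,B)$ together with the fact that the eigenvalues of $A$ lie in the open disc (which makes the Stein-type series $T=\sum_{k\ge 0}(A^*)^k N A^k$ convergent), I would show that $G(z)^*NG(z)\equiv 0$ on the circle if and only if $N=P-A^*PA$ for some Hermitian $P$ with $PB=0$. Consequently, \eqref{eq:factor} is equivalent to finding such a $P$ together with $(C,\Omega)$ satisfying $CB=I$, $\Omega\succ 0$, and
\[
\Lambda + A^*PA - P \;=\; C^*\Omega C .
\]
The normalization $CB=I$ then forces $\Omega=B^*(\Lambda+A^*PA)B$ and $C=\Omega^{-1}B^*(\Lambda+A^*PA)$ (using $PB=0$), so the whole problem collapses to determining the single matrix $P$.

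Setting $P':=\Lambda+A^*PA$, the compatibility condition reshapes into the classical DARE
\[
P' \;=\; \Lambda + A^*P'A - A^*P'B\,(B^*P'B)^{-1}B^*P'A ,
\]
and I would invoke the YPK positive-real lemma, together with the strict positivity of $p$ on the unit circle, to guarantee existence and uniqueness of the stabilizing Hermitian solution $P'_+$ (with $B^*P'_+B\succ 0$). The associated closed-loop matrix $A_{\mathrm{cl}}:=A-B(B^*P'_+B)^{-1}B^*P'_+A$ would then have all its eigenvalues strictly inside the disc.

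To close the argument I would verify the outer condition. A direct computation shows that $V(z)=C(I-zA)^{-1}B=I+CA(wI-A)^{-1}B$ with $w=1/z$ (using $CB=I$); the zeros of this expression in $w$ are precisely the eigenvalues of $A-BCA=A_{\mathrm{cl}}$, so Schur stability of $A_{\mathrm{cl}}$ is equivalent to $\det V(z)$ having no zero in $|z|\le 1$. Uniqueness of $(C,\Omega)$ then follows from uniqueness of $P'_+$. I expect the main obstacle to be executing Step~1 cleanly and extracting the stabilizing DARE solution from the KYP framework without circularity; the strict-positivity hypothesis on $p$ is precisely what is needed to bring the Popov--Yakubovich machinery to bear.
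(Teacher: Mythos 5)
Your route is viable and is genuinely different from both arguments in the paper. Your Step 1 is correct and provable exactly as you sketch: matching Fourier coefficients of $G(z)^*NG(z)$ and using reachability shows $G^*NG\equiv 0$ on the circle iff $N=P-A^*PA$ with $P=P^*$, $PB=0$ (with $P=\sum_{k\ge0}(A^*)^kNA^k$), and the subsequent algebra reducing \eqref{eq:factor} with $CB=I$, $\Omega\succ0$ to the Riccati equation for $P'=\Lambda+A^*PA$ is sound; so is the zero computation, since $\det\sigma(z)=\det\bigl(I-z(A-BCA)\bigr)/\det(I-zA)$ and the denominator has no roots in the closed disc, so condition iii) is exactly Schur stability of $A-BCA$ with no cancellation issues, and uniqueness then does follow from uniqueness of the stabilizing solution. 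By contrast, the paper's second proof runs the classical machinery in the standard form: it writes $p=F+F^*$ with $F$ positive real, invokes the positive-real lemma to get the LMI $M(P)\ge0$, takes the minimal solution $P_-$ (stabilizing solution of the associated ARE) to obtain the outer factor $W_-(z)=B^*A^*(zI-A^*)^{-1}K_-^*+D_-^*$, and then proves the crucial structural collapse $D_-=K_-B$ a posteriori, first for nonsingular $A$ (via $p(0)=0$) and then for singular $A$ by an $\varepsilon$-perturbation and continuity of the stabilizing Riccati solution; the paper's first proof is entirely different (maximum-entropy solution of the covariance moment problem). Your scheme builds the collapse into the problem from the start through the null-space parametrization, which buys you a single clean reduction with no case split on the invertibility of $A$ and no continuity argument.

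The one place where your proposal defers the real work is the sentence ``invoke the YPK positive-real lemma \dots to guarantee existence and uniqueness of the stabilizing Hermitian solution $P'_+$.'' The classical positive-real lemma (the form used in the paper) attaches to the decomposition $p=F+F^*$ and yields the Riccati equation with the constant block $\Lambda_0-B^*A^*PAB$, i.e.\ a spectral factor with a nonzero constant term; it does not directly deliver a stabilizing solution of your equation $P'=\Lambda+A^*P'A-A^*P'B(B^*P'B)^{-1}B^*P'A$, which is a DARE with sign-indefinite $Q=\Lambda$ and zero ``$R$'' block. If you fall back on the classical lemma you are back to having to prove the constant-term collapse, which is exactly the crux the paper handles separately. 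To close your argument as stated you need the more general discrete-time Popov--Yakubovich/indefinite-DARE theorem: the Popov function of the triple $(A,B;\Lambda,0,0)$ coincides with $p$ on the unit circle (a short computation worth writing out), so strict positivity of $p$ plus stabilizability of $(A,B)$ (automatic, $A$ is Schur) gives a unique stabilizing Hermitian solution with $B^*P'_+B\succ0$; the positivity of $B^*P'_+B$ follows from the factorization $p(z)=V(z)^*\,(B^*P'_+B)\,V(z)$ with $V$ invertible on the circle. With that theorem cited or proved, your argument is complete; without it, the central existence claim is asserted rather than established.
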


Thus, in the case of the Fej\'er-Riesz theorem, $p(z)$ is of the form \eqref{eq:correctform} with
\[
\Lambda =\left[\begin{matrix} 
\Lambda_0 & \Lambda_1 &\ldots &\ldots& \Lambda_{n-1}\\
\Lambda_{-1} & \Lambda_0 & \Lambda_1&& \Lambda_{n-2}\\
 \vdots &\Lambda_{-1}&\Lambda_0&\ddots&\vdots\\\vdots&&\ddots&\ddots&\Lambda_1 \\
\Lambda_{n-1}& \ldots &\ldots&\Lambda_{-1}& \Lambda_0 \end{matrix}\right],
\]
having a Toeplitz structure and $\Lambda_k=\frac{c_k}{n-k}\in\mathbb C$ for $k\in\{1,\ldots,n-1\}$, while $A=[A_{ij}]$ is an $n\times n$ companion matrix ($A_{ij}=1$ when $j-i=1$ and zero otherwise), $B=[1,\, 0,\ldots,\,0]^\prime $, and
$G(z)=\left[\begin{matrix}1, & z,& \ldots ,&z^{n-1}\end{matrix}\right]^\prime$.
Throughout $^\prime$ denotes {\em ``transposition''}, and $^*$ denotes {\em ``complex conjugation and transposition''} as well {\em as ``para-conjugation''}\footnote{The notation $G^*(z^{-1})$ indicates transposition and conjugation taking place on the coefficients of $G(z)$, while $z$ is replaced by $z^{-1}$. Thereby, $G(z)^*\mid_{z=e^{i\theta}}= G(e^{i\theta})^*$.}, i.e., $G(z)^*:=G^*(z^{-1})$ and, later on, the adjoint of linear operators.
The statement of the Fej\'er-Riesz theorem asserts that $g(z)$ (which corresponds to $\sqrt{\Omega}\sigma(z)$ in Theorem \ref{thm:factorization}) satisfies
\[
p(e^{i\theta})=|g(e^{i\theta})|^2
\]
in agreement with \eqref{eq:factor}.
Moreover, since the roots of the polynomial $g(z)$ are in the complement of the unit disc, both $g(z)$ as well as its inverse are analytic in the closed unit disc, in agreement with Theorem \ref{thm:factorization}.
\black

In the matrix-valued setting of Theorem \ref{thm:factorization}, the ``zeros'' of $\sigma(z)$, that consist of the spectrum of $\sigma(z)^{-1}$, can be taken to be on the complement of the unit disc as stated.

\begin{remark} \rm
At first glance it appears that Theorem \ref{thm:factorization} follows from
 standard factorization theory for non-negative matrix-valued functions on the unit circle, based on the positive real lemma, see, e.g., \cite[Section 6]{LPbook}, or \cite{youla}. However, upon closer examination, standard spectral factorization results suggest that
\[
G(z)^*\Lambda G(z) = (D+zHG(z))^*(D+zHG(z))
\]
where $D+zHG(z)=D+zH(I-zA)^{-1}B$ is a canonical spectral factor of degree $n$. Thus, the main point of  Theorem~\ref{thm:factorization}, and the link to the Fej\'er-Riesz factorization, rests on the fact that there exists a matrix $L$ such that the canonical spetral factor is
\[
D+zH(I-zA)^{-1}B = L(I-zA)^{-1}B
\]
and hence, $D=L B$ and $H=LA$. In the rest of the paper we will assert and derive these identities in two different ways.
\end{remark}

\section{Motivation and notation}\label{sec:motivation}

Our interest in the type of generalized trigonometric polynomials that are introduced and studied below, stems from the moment problem that seeks to characterize admissible state covariances of linear dynamical systems. Specifically,
consider the linear, discrete-time dynamical system
\begin{align}\label{eq:dynamics}
x_{k+1}=Ax_k+Bu_k
\end{align}
where $x_k\in\mathbb C^n$ and $u_k\in\mathbb C^m$ are stationary (possibly, complex-valued) Gaussian processes, $A,B$ is a reachable pair of matrices and the spectrum of $A$ is contained in the open unit disc. Thus, the corresponding matrix-valued kernel in \eqref{eq:kernel}, $G(z)=(I-zA)^{-1}B$, is analytic in the closed unit disc.

If $\mu_{uu}(e^{i\theta})$ ($\theta\in[-\pi,\pi]$) denotes the matrix-valued spectral measure of the (stationary) input process $u_k$, then the state covariance is
\begin{align}\nonumber
R&=\mathbb E\{x_kx_k'\}\\\label{eq:int}
&=\int_{[0,2\pi]} G(e^{i\theta})d\mu_{uu}(\theta)G(e^{i\theta})^*.
\end{align}

The algebraic structure of stationary state covariances \cite{structure,ME} is dictated by the fact that
$R\in{\rm range}(\Gamma)$, with the map
\begin{eqnarray}
\Gamma &:& d\mu(\theta) \mapsto R=\int_{[0,2\pi]} G(e^{i\theta})d\mu(\theta)G(e^{i\theta})^*
\end{eqnarray}
acting on Hermitian-valued measures on $[0,2\pi]$; it turns out that
\begin{eqnarray}
{\rm range}(\Gamma)=\{R\in \mathbb C^{n\times n}\mid R-ARA^* = BX+X^* B^*, \mbox{ for some }X\in \mathbb C^{m\times n}\}.
\end{eqnarray}
In the converse direction, for any $R\in {\rm range}(\Gamma)\cap \mathbb S_+^{n\times n}$, with $\mathbb S_+^{n\times n}$ the cone of Hermitian $n\times n$ non-negative matrices, there exists a non-negative measure $d\mu_{uu}(\theta)$ so that \eqref{eq:int} holds \cite{structure,ME}.

In the special case when $R>0$, a spectral measure that satisfies \eqref{eq:int} can be selected to be in the form (see \cite{IT})
\begin{equation}\label{eq:denominator}
d\mu(\theta)=(G(e^{i\theta})^*\Lambda G(e^{i\theta}))^{-1}d\theta.
\end{equation}
This particular solution occupies a unique place amongst all solutions to \eqref{eq:int}, as it maximizes the entropy functional 
\[
\int_{[0,2\pi]}\log(\det (\dot\mu(\theta))) d\theta,
\]
with $\dot \mu$ denoting the almost everywhere defined derivative of $\mu$. 
The matrix $\Lambda$ that appears in the moment problem described above show up as a Lagrange multiplier, see e.g., \cite{GL1}.
The generalized trigonometric polynomial
\[
G(z)^*\Lambda G(z)
\]
for $z=e^{i\theta}$, which appears as the denominator in \eqref{eq:denominator} is the object of study at the present.
The adjoint map of $\Gamma$ is
\[
\Gamma^*:\Lambda \mapsto G(z)^*\Lambda G(z),
\]
and therefore, $\Lambda$ may always be taken to belong to ${\rm range}(\Gamma)=({\rm null}(\Gamma^*))^\perp$.

We note that the moment problem \eqref{eq:int} lies in the center of a rather rich and timely development in spectral analysis of vector-valued time series, e.g., see \cite{Gharmonic,ferrante1,ferrante2,jiang,zorzi,baggio,karlsson,zhu} and the references therein.

\section{Geometry and factorization of non-negative trigonometric polynomials}
 
We continue with a key lemma of independent interest on the geometric structure of the dual to a positive cone that is specified by algebraic constraints. 
%
The lemma implies (Corollary \ref{cor1}) the
existence of factorizations for matrix-valued non-negative generalized trigonometric polynomials, i.e., of the form $G(z)^*\Lambda G(z)$, for matrices $\Lambda$ that may not necessarily be non-negative definite.

\subsection{Duality and linear structure}

\newcommand{\dual}{{\rm dual}}
Let $\mathfrak C$ to be a self-dual non-negative cone, i.e., this is a subset of an inner product space $\mathfrak S$ that which is closed under addition and multiplication by non-negative scalars, and satisfies
\[
\mathfrak C=\mathfrak C^\dual.
\]
Throughout $\mathfrak C^\dual$ denotes the dual cone which is defined by
\[
\mathfrak C^\dual :=\{Q\in \mathfrak S \mid \langle Q,P\rangle \geq 0,\; \forall P \in \mathfrak C\}.
\]
Consider now a linear subspace $\mathcal S\subset \mathfrak S$, and let
$\Pi_{\mathcal S}$ denote
the orthogonal projection onto $\mathcal S$. It is of interest to consider a subset of $\mathfrak C$ whose elements are specified to satisfy linear constraints. Specifically, we consider 
$\mathcal S\cap \mathfrak C$ which is in itself a non-negative cone and its dual, in $\mathcal S$, is
\[
(\mathcal S\cap \mathfrak C)^\dual =\{Q\in \mathcal S\mid \langle Q,P\rangle\geq 0,\;\forall P\in \mathcal S\cap \mathfrak C\}.
\]
The following holds.
\begin{lemma}\label{ref:lemma1}
$(\mathcal S\cap \mathfrak C )^\dual=\Pi_{\mathcal S} \mathfrak C$.
\end{lemma}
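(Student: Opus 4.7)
The plan is to establish the two inclusions separately. For $\Pi_{\mathcal{S}}\mathfrak{C}\subseteq(\mathcal{S}\cap\mathfrak{C})^{\mathrm{dual}}$, I would start with an arbitrary $Q=\Pi_{\mathcal{S}}P$ with $P\in\mathfrak{C}$ and test it against any $R\in\mathcal{S}\cap\mathfrak{C}$. Since $\Pi_{\mathcal{S}}$ is self-adjoint and $R\in\mathcal{S}$ (so $\Pi_{\mathcal{S}}R=R$), one gets $\langle Q,R\rangle=\langle\Pi_{\mathcal{S}}P,R\rangle=\langle P,\Pi_{\mathcal{S}}R\rangle=\langle P,R\rangle$, and this is nonnegative by self-duality of $\mathfrak{C}$.

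For the reverse inclusion $(\mathcal{S}\cap\mathfrak{C})^{\mathrm{dual}}\subseteq\Pi_{\mathcal{S}}\mathfrak{C}$, the natural route is a separation (Hahn-Banach / bipolar) argument inside $\mathcal{S}$. Suppose for contradiction that $Q\in\mathcal{S}$ lies in $(\mathcal{S}\cap\mathfrak{C})^{\mathrm{dual}}$ but $Q\notin\Pi_{\mathcal{S}}\mathfrak{C}$. Noting that $\Pi_{\mathcal{S}}\mathfrak{C}$ is a convex cone in $\mathcal{S}$, hyperplane separation (assuming its closedness, addressed below) yields an $R\in\mathcal{S}$ with $\langle R,Q\rangle<0$ and $\langle R,\Pi_{\mathcal{S}}P\rangle\geq 0$ for every $P\in\mathfrak{C}$. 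Since $R\in\mathcal{S}$, the second condition rewrites, via self-adjointness of $\Pi_{\mathcal{S}}$, as $\langle R,P\rangle\geq 0$ for every $P\in\mathfrak{C}$, so $R\in\mathfrak{C}^{\mathrm{dual}}=\mathfrak{C}$, and thus $R\in\mathcal{S}\cap\mathfrak{C}$. But then the hypothesis $Q\in(\mathcal{S}\cap\mathfrak{C})^{\mathrm{dual}}$ forces $\langle R,Q\rangle\geq 0$, a contradiction.

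The step I expect to be the main obstacle is verifying that $\Pi_{\mathcal{S}}\mathfrak{C}$ is closed, which is needed so that a point outside it can be strictly separated. Images of closed convex cones under linear maps are not automatically closed; closure typically follows either from polyhedrality of $\mathfrak{C}$, from the transversality condition $\mathrm{null}(\Pi_{\mathcal{S}})\cap\mathfrak{C}=\{0\}$, or from compactness of a base. In the setting that motivates the application of this lemma (namely $\mathfrak{C}$ the cone of Hermitian positive semidefinite matrices and $\mathcal{S}$ the range of $\Gamma$ in Section~\ref{sec:motivation}), I would argue closedness by exhibiting such a structural property, or equivalently by showing that $\Pi_{\mathcal{S}}\mathfrak{C}$ equals its own bidual inside $\mathcal{S}$, which combined with the first inclusion closes the loop.
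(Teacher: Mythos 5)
Your first inclusion is exactly the paper's argument. For the reverse inclusion the paper looks different on the surface: it proves instead the containment $(\Pi_{\mathcal S}\mathfrak C)^{\rm dual}\subseteq \mathcal S\cap\mathfrak C$ --- by the very computation you apply to your separating element $R$ --- and declares this ``equivalent'' to the containment actually wanted. That equivalence is the bipolar theorem applied to $\Pi_{\mathcal S}\mathfrak C$ inside $\mathcal S$, so it presupposes that $\Pi_{\mathcal S}\mathfrak C$ is a closed convex cone, which is precisely the hypothesis your separation argument needs. In other words, the two proofs carry identical duality content; yours makes explicit a point the paper passes over silently. And the point is genuine rather than pedantic: at the stated level of generality the identity can fail. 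Take $\mathfrak S$ to be the $2\times 2$ Hermitian matrices with the trace inner product, $\mathfrak C=\mathbb S_+$, and $\mathcal S$ the subspace of matrices whose $(2,2)$ entry vanishes. Then $\Pi_{\mathcal S}\mathfrak C$ consists of the matrices in $\mathcal S$ with $(1,1)$ entry $a$ and off-diagonal entry $b$ where either $a>0$ (any $b$) or $a=b=0$; this cone is not closed, while $(\mathcal S\cap\mathfrak C)^{\rm dual}=\{a\geq 0,\ b \text{ arbitrary}\}$ is closed and strictly larger.

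What your proposal leaves open is the actual verification of closedness, and in the setting where the lemma is used this is short, via the transversality criterion you name. Here ${\rm null}(\Pi_{\mathcal S})=\mathcal S^\perp={\rm null}(\Gamma^*)$ consists of the Hermitian $M$ with $G(z)^*MG(z)\equiv 0$ on the unit circle. If such an $M$ also lies in $\mathbb S_+$, then $G(e^{i\theta})^*MG(e^{i\theta})=0$ forces $M^{1/2}G(e^{i\theta})=0$ for every $\theta$; since $M^{1/2}G(z)=\sum_{k\geq 0}z^k M^{1/2}A^kB$, all coefficients $M^{1/2}A^kB$ vanish, and reachability of $(A,B)$ gives $M=0$. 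Hence $\mathcal S^\perp\cap\mathbb S_+=\{0\}$, and the standard compactness argument (if $P_n\in\mathbb S_+$ with $\Pi_{\mathcal S}P_n$ convergent had $\|P_n\|\to\infty$, a limit point of $P_n/\|P_n\|$ would be a nonzero element of $\mathcal S^\perp\cap\mathbb S_+$) shows that $\Pi_{\mathcal S}\mathbb S_+$ is closed; equivalently, it equals its bidual in $\mathcal S$, which is the alternative way you proposed to close the loop. With this supplied, your separation argument is complete and yields the lemma in the generality in which it is invoked in Proposition~\ref{prop:L+M}; note that the paper's own proof needs exactly the same fact to justify its ``equivalent containment'' step.
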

\begin{proof}
First note that $\Pi_{\mathcal S} \mathfrak C \subseteq  (\mathcal S\cap \mathfrak C)^\dual$. To see this take a $\Lambda=\Pi_\mathcal S P\in \Pi_{\mathcal S} \mathfrak C$, with $P\in \mathfrak C$, and for an arbitrary $Q\in \mathfrak C\cap \mathcal S$, observe that since $\Pi_\mathcal S Q=Q$,
\[
\langle \Lambda, Q\rangle = \langle \Pi_\mathcal S P, Q\rangle = \langle P,Q\rangle \geq 0.
\]
Thus, $\Lambda \in (\mathcal S\cap \mathfrak C)^\dual$. This proves this first claim.

In order to establish the reverse inclusion, $(\mathcal S\cap \mathfrak C )^\dual\subseteq\Pi_{\mathcal S} \mathfrak C$, we prove instead the equivalent containment $\mathcal S\cap \mathfrak C \supseteq(\Pi_{\mathcal S} \mathfrak C)^\dual$. To this end, consider an arbitrary
element $Q\in (\Pi_{\mathcal S} \mathfrak C)^\dual$. It is readily seen that besides
$Q\in\mathcal S$,
\[
\langle Q,\Pi_\mathcal S P\rangle \geq 0 
\]
for all $P\in\mathfrak C$. But $\Pi_\mathcal S Q=Q$ and therefore, $\langle Q,P\rangle \geq 0$ for all $P\in \mathfrak C$. Since $ \mathfrak C$ is self dual, $Q\in  \mathfrak C$, which completes the proof.
\end{proof}

\subsection{Factorization of non-negative matrix polynomials.} Our usage of the lemma will rest on specializing to the case where
\[
\mathfrak C= \mathbb S_+,
\]
 the space of non-negative $n\times n$ Hermitian matrices (throughout $\mathbb S$ denotes Hermitian matrices), while the subspace
 \[
 \mathcal S =\{R\in\mathbb C^{n\times n} \mid R-ARA^* = BX+X^* B^*, \mbox{ for some }X\in \mathbb C^{m\times n}\},
 \]
which is precisely the range of $\Gamma$, d`d in Section \ref{sec:motivation}.
In this case, the adjoint operator to $\Gamma$ is
\[
\Gamma^*\,:\, \Lambda \in \mathbb S\to G(z)^*\Lambda G(z),
\]
and the orthogonal complement of its null space is of course ${\rm rangel}(\Gamma)=\mathcal S$. The following result readily follows.

\begin{prop} \label{prop:L+M}
Consider a generalized trigonometric polynomial $G(z)^*\Lambda G(z)$, for some $\Lambda \in \mathcal S:={\rm range}(\Gamma)$.
The following are equivalent:
\begin{itemize}
\item[i)] $G(e^{-i\theta})^*\Lambda G(e^{i\theta})\geq 0$ for all $\theta \in [-\pi,\pi]$.
\item[ii)] there exists an $M\in \mathcal S^\perp$ such that $\Lambda +M\geq 0$.
\end{itemize}
\end{prop}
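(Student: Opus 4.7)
The plan is to deduce Proposition \ref{prop:L+M} as essentially a direct consequence of Lemma \ref{ref:lemma1} applied with $\mathfrak{C} = \mathbb{S}_+$ (self-dual under the trace inner product on Hermitian matrices) and $\mathcal{S} = \mathrm{range}(\Gamma)$. The only real work is to translate the pointwise positivity condition (i) into the dual-cone condition that appears in the lemma.

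First I would argue that (i) is equivalent to $\trace(\Lambda R) \geq 0$ for every $R \in \mathcal{S} \cap \mathbb{S}_+$. The forward implication follows from the identity
\[
\trace\bigl(\Lambda\, \Gamma(d\mu)\bigr) = \int_{0}^{2\pi} \trace\bigl(G(e^{i\theta})^*\Lambda\, G(e^{i\theta})\, d\mu(\theta)\bigr),
\]
combined with the realization result recalled in Section \ref{sec:motivation}: every $R \in \mathcal{S} \cap \mathbb{S}_+$ equals $\Gamma(d\mu)$ for some non-negative Hermitian matrix-valued measure $d\mu$. Conversely, taking $R = G(e^{i\theta_0})vv^*G(e^{i\theta_0})^* = \Gamma(vv^*\delta_{\theta_0})$ for arbitrary $v$ and $\theta_0 \in [0,2\pi]$ produces an element of $\mathcal{S} \cap \mathbb{S}_+$, and the inequality $\trace(\Lambda R) = v^* G(e^{i\theta_0})^* \Lambda\, G(e^{i\theta_0}) v \geq 0$ forces $G(e^{i\theta})^* \Lambda\, G(e^{i\theta}) \geq 0$ pointwise.

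With this reformulation, condition (i) asserts exactly that $\Lambda$ lies in $(\mathcal{S}\cap \mathbb{S}_+)^{\dual}$ (the dual being taken inside $\mathcal{S}$, and using that $\Lambda \in \mathcal{S}$ by hypothesis). By Lemma \ref{ref:lemma1}, this dual coincides with $\Pi_{\mathcal{S}}\mathbb{S}_+$, so (i) is equivalent to the existence of some $P \in \mathbb{S}_+$ with $\Pi_{\mathcal{S}} P = \Lambda$. Setting $M := P - \Lambda$, the identity $\Pi_{\mathcal{S}} P = \Lambda$ becomes $M \in \mathcal{S}^\perp$, while $P \geq 0$ becomes $\Lambda + M \geq 0$, which is exactly (ii); this translation is clearly reversible.

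I do not expect a real obstacle: the only point requiring care is the clean invocation of the realization identity $\mathcal{S} \cap \mathbb{S}_+ = \Gamma(\{d\mu \geq 0\})$, which is the standard state-covariance result already quoted in Section \ref{sec:motivation}.
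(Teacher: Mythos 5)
Your proposal is correct and follows essentially the same route as the paper: both deduce the result by specializing Lemma~\ref{ref:lemma1} to $\mathfrak{C}=\mathbb{S}_+$ and $\mathcal{S}=\mathrm{range}(\Gamma)$, with condition (ii) read off from $\Lambda\in\Pi_{\mathcal S}\mathbb{S}_+$. The only difference is that you make explicit the translation of (i) into the dual-cone condition via the state-covariance realization result of Section~\ref{sec:motivation} (and handle (ii)$\Rightarrow$(i) through the same duality rather than the paper's direct observation that $G(z)^*MG(z)\equiv 0$ for $M\in\mathcal{S}^\perp$), which fills in a step the paper leaves implicit.
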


\begin{proof}
If ii) holds, since $G(e^{-i\theta})^*M G(e^{i\theta})=0$ identically, then clearly
\[
G(e^{-i\theta})^*\Lambda G(e^{i\theta})=G(e^{-i\theta})^*(\Lambda+M) G(e^{i\theta})\geq 0.
\]
If i) holds, then by taking $\mathfrak C=\mathbb S_+$ and $\mathcal S$ as above and applying Lemma \ref{ref:lemma1}, ii) follows.
\end{proof}

\begin{corollary}\label{cor1}
Consider a generalized trigonometric polynomial $G(z)^*\Lambda G(z)$ such that 
\[
G(e^{-i\theta})^*\Lambda G(e^{i\theta})\geq 0, \mbox{ for all }\theta \in [-\pi,\pi].
\]
There is an $L=L^*\in\mathbb C^{n\times n}$ such that 
\begin{equation}
\label{p(z)factorization}
p(z)=G(z)^*L^*LG(z).
\end{equation}
\end{corollary}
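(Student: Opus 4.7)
The plan is to derive this as a direct consequence of Proposition~\ref{prop:L+M}, with only one extra ingredient beyond its statement, namely extracting a Hermitian square root of a positive semidefinite matrix.

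First, I would reduce to the case $\Lambda\in\mathcal{S}={\rm range}(\Gamma)$, which is the standing hypothesis of Proposition~\ref{prop:L+M}. Writing $\Lambda=\Lambda_1+\Lambda_2$ with $\Lambda_1=\Pi_{\mathcal{S}}\Lambda\in\mathcal{S}$ and $\Lambda_2\in\mathcal{S}^\perp$, and recalling from Section~\ref{sec:motivation} that $\mathcal{S}^\perp={\rm null}(\Gamma^*)$ while $\Gamma^*:\Lambda\mapsto G(z)^*\Lambda G(z)$, the component $\Lambda_2$ contributes $G(z)^*\Lambda_2 G(z)\equiv 0$, so $G(z)^*\Lambda G(z)=G(z)^*\Lambda_1 G(z)$. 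The non-negativity hypothesis of the corollary is therefore preserved, and we may assume $\Lambda\in\mathcal{S}$.

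Next, applying Proposition~\ref{prop:L+M} yields some $M\in\mathcal{S}^\perp$ with $\Lambda+M\geq 0$. The same observation as above gives $G(z)^*MG(z)\equiv 0$, so
\[
p(z)=G(z)^*\Lambda G(z)=G(z)^*(\Lambda+M)G(z).
\]
Since $\Lambda+M$ is Hermitian and positive semidefinite, it admits a unique Hermitian positive semidefinite square root $L:=(\Lambda+M)^{1/2}$. Then $L=L^*$ and $L^*L=L^2=\Lambda+M$, which yields the desired factorization~\eqref{p(z)factorization}.

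I do not anticipate any genuine obstacle: the substance sits entirely in Lemma~\ref{ref:lemma1} and Proposition~\ref{prop:L+M}, while the reduction to $\Lambda\in\mathcal{S}$ and the extraction of a Hermitian square root are routine. The one point worth flagging explicitly is that the map $\Lambda\mapsto G(z)^*\Lambda G(z)$ is precisely $\Gamma^*$, so its behavior depends on $\Lambda$ only through the $\mathcal{S}$-component; this is what simultaneously licenses the preliminary reduction and the absorption of $M$ into $\Lambda$ in the main step.
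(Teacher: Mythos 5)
Your proposal is correct and is essentially the paper's (implicit) argument: the corollary is meant to follow directly from Proposition~\ref{prop:L+M} by absorbing $M$ via $G(z)^*MG(z)\equiv 0$ and taking the Hermitian square root $L=(\Lambda+M)^{1/2}$. Your preliminary reduction to $\Lambda\in\mathcal{S}$ is a sound way to reconcile the corollary's hypotheses with those of the proposition, and it matches the paper's earlier remark that $\Lambda$ may always be taken in ${\rm range}(\Gamma)=({\rm null}(\Gamma^*))^\perp$.
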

While the above theory implies the existence of factorizations, we note however that it provides no immediate information on the zero-structure of $LG(z)$.

\section{1st Proof of Theorem~\ref{thm:factorization}}

We begin by observing that since $G(z)^*\Lambda G(z)> 0$ is rational and non-singular everywhere on the unit circle, then $(G(e^{i\theta})^*\Lambda G(e^{i\theta}))^{-1}> 0$ and integrable on the unit circle.
Thus, taking 
\begin{equation}\label{eq:form}
d\mu(\theta)=(G(e^{i\theta})^*\Lambda G(e^{i\theta}))^{-1}d\theta,
\end{equation}
we compute
\begin{equation}\label{eq:moment}
R=\int_{[0,2\pi]} G(e^{i\theta})d\mu(\theta)G(e^{i\theta})^*,
\end{equation}
which is Hermitian and positive definite.
We proceed to utilize some of the results in \cite{ME} on solutions to the matricial moment problem \eqref{eq:moment}.

The minimizer
\newcommand{\Ga}{{C_o}}
\[
C  = {\rm argmin}_X \{\trace(XRX^* ) \mid XB=I, X\in \mathbb C^{m\times n}\},
\]
is obtained explicitely in the form
\[
C  = (B^*  R^{-1} B)^{-1} B^* R^{-1}.
\]
Now set
\begin{align*}
\bar\Omega &= CRC^*\\
&= (B^*R^{-1}B)^{-1}, \mbox{ and} \\
\sigma(z)&=C G(z).
\end{align*}
Then, 
\[
\sigma(z)^{-1}=I-z CA(I-z(A-BCA))^{-1}B,
\]
and has no poles on the closed unit disc due to the fact that the state matrix
$(A-BCA)$ satisfies the Lyapunov equation
\[
R=B\bar\Omega B^*  +(A-BCA)R(A-BCA)^* .
\]

It is shown in \cite[Theorem 1]{ME} and  \cite[Theorem 1]{IT}, that the unique
maximizer of the strictly concave entropy functional
\[
\int_{-\pi}^{\pi} \log \det \left( \dot \mu (\theta) \right )d\theta,
\]
subject to the moment constraint \eqref{eq:moment}, is of the form
\eqref{eq:form}, i.e.,
\[
d\mu(\theta)=(G(e^{i\theta})^*Q G(e^{i\theta}))^{-1}d\theta,
\]
where the parameter $Q$ arises as the Lagrange multiplier for the constraint. Thus, it must be that $Q=\Lambda+M$, $M\in {\rm null}(\Gamma^*))^\perp$.
Moreover, it is shown that this unique maximizer is
\begin{align*}
d\mu(\theta)&= \left( \sigma(e^{i\theta})^{-1}\bar\Omega  (\sigma(e^{i\theta})^{-1})^*\right)d\theta\\
&=\left( (CG(e^{i\theta}))^* \bar\Omega^{-1} CG(e^{i\theta})\right)^{-1}d\theta.
\end{align*}
Hence, we recover the form claimed in Theorem \ref{thm:factorization}, namely, that
\begin{align*}
G(e^{i\theta}))^* \Lambda G(e^{i\theta}) &= G(e^{i\theta}))^* Q G(e^{i\theta})\\
&=\underbrace{G(z)^*C^*}_{\sigma(z)^*}\Omega \underbrace{CG(z)}_{\sigma(z)}
\end{align*}
for $Q=C^*  \Omega C$ and
$\Omega =\bar \Omega^{-1}$.
Conditions i) and iii) hold. Uniqueness in ii)  follows since, by
\cite[Theorem 1]{ME}, the maximizer of the entropy functional is of the required form.
 \hfill $\Box$

\black
\section{2nd Proof of Theorem~\ref{thm:factorization}}

By Proposition~\ref{prop:L+M} there is an $L\in\mathbb C^{n\times n}$ such that 
\begin{equation}
\label{p(z)}
p(z)=G(z)^*L^*LG(z)=W(z)W(z)^*,
\end{equation}
where $W(z):=G(z)^*L^*$ is given by
\begin{align}
   W(z) &=zB^*(zI-A^*)^{-1}L^* =B^*(zI-A^*+A^*)(zI-A^*)^{-1}L^*\notag \\
    &=B^*A^*(zI-A^*)^{-1}L^* +B^*L^* ,\label{Wprel}
\end{align}
which is an $m\times n$ stable spectral factor of $p(z)$. Since thus $p(z)$ is para-Hermitian, it is a spectral density, so there is additive decomposition 
\begin{equation}
\label{p=F+F*}
p(z)= F(z)+F(z)^*,
\end{equation}
where $F(z)$ is an $m\times m$ positive real function with the structure
\begin{equation}
\label{F}
F(z)=\tfrac12 \Lambda_0 + B^*A^*(zI-A^*)^{-1}\bar{B},
\end{equation}
where $(A^*,\bar{B})$ is a reachable pair \cite[p. 199]{LPbook}. In fact, by the Positive Real Lemma \cite[Theorem 6.7.4]{LPbook}, $F(z)$ is positive real if and only if there is a symmetric $n\times n$ matrix $P$, which will be positive definite, such that 
\begin{equation}
\label{M}
M(P):= \begin{bmatrix}P-A^*PA & \bar{B}-A^*PAB\\ \bar{B}^* -B^*A^*PA& \Lambda_0 -BA^*PAB\end{bmatrix}\geq 0 ,
\end{equation} 
in terms of which 
\begin{equation}
\label{spectralfactorization}
p(z)=\begin{bmatrix}B^*A^*(zI-A^*)^{-1}&I\end{bmatrix}M(P)\begin{bmatrix}(z^{-1}I-A)^{-1}AB\\I\end{bmatrix}.
\end{equation}
In view of \eqref{M} there is a rank factorization 
\begin{equation}
\label{MKD}
M(P)=\begin{bmatrix}K^*\\D^*\end{bmatrix}\begin{bmatrix}K&D\end{bmatrix}
\end{equation}
yielding via \eqref{spectralfactorization} the spectral factor 
\begin{equation}
\label{W}
W(z)=B^*A^*(zI-A^*)^{-1}K^* + D^*.
\end{equation}
There is one such spectral factor for each $P$ satisfying \eqref{M} \cite[Section 6.7.1]{LPbook}. 

Now, for the proof of Theorem~\ref{thm:factorization}, we need the $m\times m$ outer spectral factor
\begin{equation}
\label{Wminus}
W_-(z)=B^*A^*(zI-A^*)^{-1}K_-^* + D_-^*,
\end{equation}
 which corresponds to $P=P_-$ having the property that $P_-\leq P$ for all $P$ satisfying \eqref{M}. 
The positive definite matrix $P_-$ is the stabilizing solution of of the Algebraic Riccati Equation 
\begin{equation}
\label{ARE}
P=A^*PA+( \bar{B}^* -B^*A^*P)(\Lambda_0 -B^*A^*PAB)^{-1} (\bar{B}^* -B^*A^*P)^* ,
\end{equation}
which can be obtained as the limit
\begin{equation}
\label{RE}
P_-=\lim_{k\to\infty}P_k
\end{equation}
of the sequence obtained from the matrix Riccati equation
\begin{equation}
\label{ }
P_{k+1}=A^*P_kA+( \bar{B}^* -B^*A^*P_k)(\Lambda_0 -B^*A^*P_kAB)^{-1} (\bar{B}^* -B^*A^*P_k)^*,\quad P_0=0 
\end{equation}
  \cite[p. 470]{LPbook}. 
The stable spectral factor \eqref{Wminus} is outer and has  all its zeros in the open unit disc.

Let us first consider the special case when the matrix $A$ is nonsingular. Then $G(0)=B$ and $G^*(0)=0$, so $p(0)=0$.
Hence $W_-(0)=0$ as well, so it follows from \eqref{Wminus} that
\begin{equation}
\label{Dminus}
D_-=K_-B.
\end{equation}
Then reversing the  calculation leading to \eqref{Wprel} implies that 
\begin{equation}
\label{Wminus3}
W_-(z)=zB^*(zI-A^*)^{-1}K_-^*.
\end{equation}
Moreover,
\begin{displaymath}
M(P)=\begin{bmatrix}I\\B^*\end{bmatrix}K_-^*K_-\begin{bmatrix}I&B\end{bmatrix},
\end{displaymath} 
which yields in turn
\begin{subequations}
\begin{align}
   P_- &=A^*P_-A +K_-^*K_-   \label{Lyapunov}\\
   \bar{B}^* &=B^*A^*P_-A +B^*K_-^*K_- =B^*P_-\\
   \Lambda_0 &=B^*A^*P_-AB +B^*K_-^*K_-B=B^*PB =  \bar{B}^*B. \label{C0}
\end{align}
\end{subequations}
Then introduce  matrices $C\in \mathbb R^{m\times n}$ and $\Omega\in\mathbb R^{m\times m}$ such that 
 \begin{displaymath}
C^*\Omega C=K_-^*K_-\quad \text{and} \quad CB=I.
\end{displaymath}
Then $B^*K_-^*K_-B =B^*C^*\Omega CB = \Omega$, so, by \eqref{Lyapunov} and \eqref{C0}, 
\begin{equation}
\label{Omega}
\Omega=\Lambda_0 -B^*A^*P_-AB,
\end{equation}
which is the unique $\Omega$ in the theorem, and 
\begin{equation}
\label{COmega}
C^*\Omega C=P_-  -A^*P_-A.
\end{equation}
Clearly $C$ is uniquely defined. Consequently, since $W_-(z)=G(z)^*K_-^*$, it follows from \eqref{Wminus3} that
\begin{equation}
\label{CG}
CG(z)=C(I-zA)^{-1}B,
\end{equation}
which has all its zeros in the complement of the unit disc, since $W_-(z)$ is outer.

Next, consider the general case when $A$ might be singular. Then it folllows from \eqref{Wminus} and \eqref{Wprel} that
\begin{subequations}
\begin{equation}
\label{Wminus2}
W_-(z)=zB^*(zI-A^*)^{-1}K_-^* +\Psi ,
\end{equation}
where
\begin{equation}
\label{ }
\Psi=D_- -B^*K_-^* ,
\end{equation}
\end{subequations}
which, via \eqref{MKD}, is a continuous function of $P_-$. Now, for $\varepsilon >0$,  consider the positive real function 
\begin{equation}
\label{F}
F_\varepsilon(z)=\tfrac12 \Lambda_0 + B^*(A+\varepsilon I)^*(zI-(A+\varepsilon I)^*)^{-1}\bar{B},
\end{equation}
which, by the procedure above for nonsingular $A$,  corresponds to the spectral factor
\begin{equation}
\label{Wminus2}
W_\varepsilon(z)=zB^*(zI-A^*)^{-1}K_\varepsilon^* +\Psi_\varepsilon ,
\end{equation}
with $P_\varepsilon$ the stabilizing solution of the corresponding algebraic Riccati equation. From \cite{Delchamps} \black
we have\footnote{ The Riccati equation in \cite{Delchamps} is the one appearing in Kalman filtering, properly normalized. It is in terms of $P-P_-$, and is easily reformulated to our present setting \cite[Section 12.2]{LPbook}.} that $P_\varepsilon$ is continuous in $\varepsilon$, and hence so is also $\Psi_\varepsilon$. However, since $A+\varepsilon I$ is nonsingular, $\Psi_\varepsilon=0$ for $\varepsilon >0$, and hence the limit $\Psi_0$ is zero as well, yielding \eqref{Wminus3}. 

\black

\section*{Acknowledgment}
This research was partially supported by NSF under grants 1807664, 1839441, and
AFOSR under grant FA9550-20-1-0029.

\black

\end{document}